\theoremstyle{plain}
\newtheorem{prop}[subsubsection]{Proposition}
\newtheorem{cor}[subsubsection]{Corollary}
\theoremstyle{definition}
\newtheorem{example}[subsubsection]{Example}
\theoremstyle{definition}
\numberwithin{equation}{subsubsection}
\def\cC{\mathcal{C}}
\def\11{\mathbf{1}}
\def\CC{\mathbf{C}}
\def\PP{\mathbf{P}} 
\def\RR{\mathbf{R}}
\def\ZZ{\mathbf{Z}}
\def\et{\text{\'et}}
\def\Hom{\mathrm{Hom}}
\def\id{\mathrm{id}}
\def\Spec{\mathrm{Spec}}
\def\Map{\mathrm{Map}}
\def\PreGrpd{\mathrm{PreGrpd}}
\def\Ob{\mathrm{Ob}}
\def\Mor{\mathrm{Mor}}
\def\Grpd{\mathrm{Grpd}}
\def\Pre{\mathrm{Pre}}
\def\E{\mathbb{E}}
\def\B{\mathbb{B}}
\def\Aut{\mathrm{Aut}}
\def\i{\iota}
\def\Op{\mathrm{Op}}
\newcommand{\mapright}[1]{\xrightarrow{#1}}
\title{Fixed points and Langlands parameter spaces for real groups}
\author{R. Virk}
\address{The Appalachians}
\begin{document}
\maketitle
\renewcommand{\thesubsection}{\textbf{\arabic{section}.\arabic{subsection}}}
\section{Introduction}
This note records that the Langlands parameter spaces, associated by Adams-Barbasch-Vogan to a real group \cite{ABV}, may be described as homotopy fixed points (fixed point stacks) of the spaces associated to the corresponding complex group. The reader should compare with \cite[Section 4]{BN} and also consult \cite[Section 2-3]{T}.

\subsection{Conventions}
The symbol $\Gamma$ is reserved for the group $\ZZ/2\ZZ$. The action of the non-trivial element on a $\Gamma$-set is denoted $x\mapsto \bar x$.
If a category has a final object, set
\[ \ast = \text{final object}. \]
A morphism or map between categories will mean a functor of categories.

\section{Groupoids}
A groupoid is a (small) category $X$ in which all morphisms are invertible.
The set of objects of $X$ is denoted $\Ob(X)$, the set of morphisms $\Mor(X)$.
A map of groupoids $f\colon X\to Y$ is called a \emph{weak equivalence} if it is an equivalence of categories. 
It is called a \emph{fibration} if, for each $x\in\Ob(X)$ and each isomorphism $\alpha\colon f(x) \to y$ in $Y$, there exists an isomorphism $\beta\colon x\to x_1$ in $X$, such that $f(\beta) = \alpha$. Fibrations that are also weak equivalences are called \emph{acyclic} fibrations.

\begin{example}\label{eg:BG}
    If $G$ is a group acting on a set $X$, write $\E_GX$ for the groupoid with $\Ob(\E_GX) = X$ and $\Mor(\E_G X)=G\times X$. Structure maps are given by the action $G\times X \to X$ (target of a morphism), the projection $G\times X \to X$ (source of a morphism), the identity section $X \to G \times X$ (identity map), and composition of morphisms given by group multiplication. 
Set
\[ \E G = \E_G G \quad \text{and} \quad \B G = \E_G(\ast). \]
I.e., the groupoids associated to $G$ acting on itself (left multiplication), and $G$ acting trivially on a single element set, respectively.
The map $\E G \to \ast$ is an acyclic fibration. More generally, if $N\subset G$ is a normal subgroup whose action on $X$ is free, then the evident canonical map $\E_{G} X \to \E_{G/N}X/N$ is an acyclic fibration.
%
\end{example}

\subsection{Homotopy fixed points}
An action of a group on a groupoid is the data of actions on sets of objects and morphisms, such that the structure maps of the groupoid are equivariant with respect to these actions. An equivariant map between groupoids equipped with such actions is a map of groupoids that induces equivariant maps on sets of objects and morphisms.

Let $X$ be a groupoid equipped with a $\Gamma=\ZZ/2\ZZ$ action. The groupoid $X^{h\Gamma}$ of \emph{homotopy fixed points} is defined by
\[ \Ob(X^{h\Gamma}) = \{ (x,\phi)\in \Ob(X)\times \Mor(X) \mid \phi\in \Hom(x,\bar x) \text{ and } \bar\phi = \phi^{-1}\}, \]
with an arrow $(x,\phi) \to (x_1, \phi_1)$ for each $\alpha\in\Hom(x,x_1)$ such that $\phi_1\alpha = \bar\alpha\phi$ (reminder: $\alpha\mapsto \bar\alpha$ denotes the action of the non-trivial element of $\Gamma$).

\begin{example}Let $X$ be a set with $\Gamma$-action. View $X$ as a groupoid (the only morphisms are the identity maps). Then $X^{h\Gamma}$ is the usual set of fixed points $X^{\Gamma}$.
\end{example}

\begin{example}
Let $X$ be a groupoid. Consider the trivial action of $\Gamma$ on $X$. Then
\[ \Ob(X^{h\Gamma}) = \{(x,\phi)\in \Ob(X) \times \Mor(X) \mid \phi\in \Aut(x), \phi^2 = \id\}. \]
Morphisms $(x,\phi)\to (x_1,\phi_1)$ are given by $\alpha\in\Hom(x,x_1)$ such that $\phi_1\alpha=\alpha\phi$.
\end{example}

\begin{example}Consider the $\Gamma$-action on $X\times X$ given by $\overline{(x_0,x_1)} = (x_1,x_0)$. Then $(X\times X)^{h\Gamma}$ is canonically isomorphic to the groupoid with objects:
    \[ \{(x, y,\phi)\in \Ob(X)\times\Ob(X)\times\Mor(X) \mid \phi\in\Hom(x,y) \}, \]
    and arrows $(x,y,\phi)\to (x_1,y_1,\phi_1)$ given by pairs of morphisms $\alpha\colon x\to x_1$ and $\beta\colon y\to y_1$, such that $\phi_1\alpha = \beta\phi$. The map $X\to (X\times X)^{h\Gamma}$, $x\mapsto (x,x,\id)$, is a full and faithful functor. Further, any $(x,y,\phi)\in (X\times X)^{h\Gamma}$ is isomorphic to $(x,x,\id)$ via the pair of morphisms $\id\colon x\to x$ and $\phi\colon x\to y$. Hence, we have a canonical weak equivalence $X \simeq (X\times X)^{h\Gamma}$.
\end{example}

Define $\i\colon X^{h\Gamma} \to X$ by $\i((x,\phi)) = x$. Map an arrow $(x,\phi)\to (x_1,\phi_1)$ in $X^{h\Gamma}$ to the morphism $\alpha\colon x\to x_1$ it uniquely corresponds to.
\begin{prop}\label{prop:grpdincfib}The map $\i\colon X^{h\Gamma}\to X$ is a fibration. 
\end{prop}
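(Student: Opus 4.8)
The plan is to unwind the definition of fibration directly; no clever construction is needed. Fix an object $(x,\phi)\in\Ob(X^{h\Gamma})$, so that $\phi\colon x\to\bar x$ with $\bar\phi=\phi^{-1}$, and fix an isomorphism $\alpha\colon x=\i((x,\phi))\to y$ in $X$. I must produce an object $(y,\psi)\in\Ob(X^{h\Gamma})$ together with an arrow $(x,\phi)\to(y,\psi)$ in $X^{h\Gamma}$ whose image under $\i$ is $\alpha$.

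The recipe is essentially forced by the defining relation for morphisms of $X^{h\Gamma}$: an arrow $(x,\phi)\to(y,\psi)$ lying over $\alpha$ must satisfy $\psi\alpha=\bar\alpha\phi$, so I set $\psi:=\bar\alpha\,\phi\,\alpha^{-1}$. First I check that $(y,\psi)$ is a legitimate object of $X^{h\Gamma}$. Reading off sources and targets of $\alpha^{-1}\colon y\to x$, $\phi\colon x\to\bar x$, $\bar\alpha\colon\bar x\to\bar y$ shows $\psi$ is a morphism $y\to\bar y$; and the involutivity $\bar\psi=\psi^{-1}$ follows from functoriality of the $\Gamma$-action together with $\bar\phi=\phi^{-1}$: one has $\bar\psi=\overline{\bar\alpha}\,\bar\phi\,\overline{\alpha^{-1}}=\alpha\,\phi^{-1}\,\bar\alpha^{-1}$ (using $\overline{\bar\alpha}=\alpha$ and $\overline{\alpha^{-1}}=\bar\alpha^{-1}$), which is exactly $(\bar\alpha\,\phi\,\alpha^{-1})^{-1}=\psi^{-1}$. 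By construction $\alpha$ then satisfies $\psi\alpha=\bar\alpha\phi$, so it determines an arrow $\beta\colon(x,\phi)\to(y,\psi)$ in $X^{h\Gamma}$ with $\i(\beta)=\alpha$.

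Finally I note that $\beta$ is an isomorphism, i.e. that $X^{h\Gamma}$ is a groupoid: rearranging $\psi\alpha=\bar\alpha\phi$ gives $\phi\,\alpha^{-1}=\bar\alpha^{-1}\,\psi$, so $\alpha^{-1}$ defines an arrow $(y,\psi)\to(x,\phi)$, and since $\i$ is faithful and sends composites to composites of underlying morphisms, this arrow is a two-sided inverse of $\beta$. This verifies the fibration condition. There is no real obstacle here; the only step demanding attention is the check that $\bar\psi=\psi^{-1}$, where one must invoke both functoriality of the $\Gamma$-action and the identity $\overline{\bar\alpha}=\alpha$.
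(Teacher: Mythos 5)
Your construction is exactly the one in the paper: given $\alpha\colon x\to y$, take $\psi=\bar\alpha\,\phi\,\alpha^{-1}$, verify $\bar\psi=\psi^{-1}$ by the same computation, and observe that $\alpha$ then defines an arrow $(x,\phi)\to(y,\psi)$ lifting $\alpha$. The extra check that this arrow is invertible is a harmless addition (the paper leaves it implicit since $X^{h\Gamma}$ is a groupoid), so the proposal is correct and essentially identical to the paper's proof.
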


\begin{proof}
    Let $(x,\phi)\in X^{h\Gamma}$. Let $\alpha\colon x\to x_1$ be a morphism in $X$. Then
    \[
        \overline{\bar\alpha\phi\alpha^{-1}} = \alpha\bar\phi\bar\alpha^{-1} 
                                             = \alpha \phi^{-1}\bar\alpha^{-1} 
                                             =(\bar\alpha\phi\alpha^{-1})^{-1}.
                                         \]
    Hence, $(x_1, \bar\alpha\phi\alpha^{-1})\in X^{h\Gamma}$. Further, $\alpha$ gives a morphism $(x,\phi)\to (x_1, \bar\alpha\phi\alpha^{-1})$ which is mapped to $\alpha$ under $\i$. Thus, $\i$ is a fibration. 
\end{proof}

Let $Y$ be another groupoid equipped with a $\Gamma$-action. 
A $\Gamma$-equivariant map $f\colon X\to Y$ induces a map $f^{h\Gamma}\colon X^{h\theta}\to Y^{h\Gamma}$ via the prescription:
\[
    f^{h\Gamma}((x,\phi)) = (f(x), f(\phi)) \qquad \text{and}\qquad
f^{h\Gamma}(\alpha) = f(\alpha).
\]
Here, the label $\alpha$ has been used for both the arrow $(x,\phi)\to (x_1,\phi_1)$, as well as the corresponding morphism $x\to x_1$ in $X$ that uniquely determines it.

\begin{prop}\label{prop:grpdfib}If $f$ is a fibration, then so is $f^{h\Gamma}$.
\end{prop}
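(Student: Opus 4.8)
The plan is to unwind the definition of a fibration for $f^{h\Gamma}$ directly. So suppose $(x,\phi)\in X^{h\Gamma}$, and suppose we are given an arrow in $Y^{h\Gamma}$ starting at $f^{h\Gamma}((x,\phi)) = (f(x), f(\phi))$. By definition such an arrow is a morphism $\alpha\colon f(x)\to y_1$ in $Y$ satisfying $\phi_1\alpha = \bar\alpha\, f(\phi)$, where $(y_1,\phi_1)\in Y^{h\Gamma}$ is its target. I need to produce an arrow in $X^{h\Gamma}$ out of $(x,\phi)$ lifting it. Since $f$ is a fibration, there is a morphism $\beta\colon x\to x_1$ in $X$ with $f(\beta)=\alpha$. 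The natural candidate for the lifted arrow is $\beta$ itself, viewed as an arrow $(x,\phi)\to (x_1, \bar\beta\phi\beta^{-1})$ in $X^{h\Gamma}$ — this is exactly the lift produced in the proof of Proposition~\ref{prop:grpdincfib}, and it is the only possible choice of target if $\beta$ is to be an arrow of $X^{h\Gamma}$ at all, since the definition of morphism in $X^{h\Gamma}$ forces $\phi_1'\beta = \bar\beta\phi$.

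The thing that actually needs checking is that $f^{h\Gamma}$ sends this arrow to the given arrow $\alpha$ \emph{as an arrow of $Y^{h\Gamma}$}, i.e. that the target $(x_1, \bar\beta\phi\beta^{-1})$ maps to the prescribed target $(y_1,\phi_1)$. On objects, $f^{h\Gamma}(x_1, \bar\beta\phi\beta^{-1}) = (f(x_1), f(\bar\beta\phi\beta^{-1})) = (y_1, \overline{f(\beta)} f(\phi) f(\beta)^{-1}) = (y_1, \bar\alpha\, f(\phi)\, \alpha^{-1})$, using $\Gamma$-equivariance of $f$ and $f(\beta)=\alpha$. So it remains to see $\phi_1 = \bar\alpha\, f(\phi)\,\alpha^{-1}$. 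But this is immediate from the hypothesis $\phi_1\alpha = \bar\alpha\, f(\phi)$ that we assumed of the given arrow: rearranging gives $\phi_1 = \bar\alpha\, f(\phi)\,\alpha^{-1}$ exactly. Hence $f^{h\Gamma}(\beta) = \alpha$ as arrows of $Y^{h\Gamma}$, and $\beta$ is the required lift.

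I do not anticipate a genuine obstacle here; the proof is a bookkeeping exercise combining the lifting construction already used for Proposition~\ref{prop:grpdincfib} with the fibration property of $f$. The one place to be careful is to keep straight the two roles of each symbol: $\alpha$ as a morphism of $Y$ versus $\alpha$ as an arrow of $Y^{h\Gamma}$ (and likewise $\beta$), and to note that an arrow of $X^{h\Gamma}$ out of a fixed object $(x,\phi)$ is \emph{completely determined} by the underlying morphism in $X$ — its target $\phi$-component is then forced — so that checking agreement of targets is the only real content. I would write the proof in three or four sentences along exactly these lines.
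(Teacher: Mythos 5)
Your proof is correct and takes essentially the same route as the paper's: lift the underlying morphism $\alpha$ through the fibration $f$ to get $\beta\colon x\to x_1$, take $(x_1,\bar\beta\phi\beta^{-1})$ as the target (the same twisted-conjugation construction as in Proposition \ref{prop:grpdincfib}), and use the defining relation $\phi_1\alpha=\bar\alpha f(\phi)$ to see that $f^{h\Gamma}$ sends this arrow to the given one. If anything, you spell out the target-matching check $\phi_1=\bar\alpha f(\phi)\alpha^{-1}$ that the paper leaves implicit, which is a welcome bit of extra care rather than a deviation.
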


\begin{proof}Let $(x,\phi)\in X^{h\Gamma}$. Suppose $(f(x),f(\phi))\to (y,\psi)$ is an arrow in $Y^{h\Gamma}$.
    Let $\alpha\colon f(x)\to y$ denote the corresponding morphism in $Y$. If $f$ is a fibration, then there exists $\beta\colon x\to x_1$ in $X$, such that $f(\beta) = \alpha$. Further,
    \[ \overline{(\bar\beta\phi\beta^{-1})} = \beta\bar\phi\bar\beta^{-1} = \beta\phi^{-1}\bar\beta^{-1} = (\bar\beta\phi\beta^{-1})^{-1}.\]
    So $(x_1,\bar\beta\phi\beta^{-1}) \in X^{h\Gamma}$. Now $\beta\colon x\to x_1$ determines a morphism $(x,\phi)\to (x_1,\bar\beta\phi\beta^{-1})$ that is mapped by $f^{h\Gamma}$ to the arrow $(f(x),f(\phi))\to (x_1,\psi)$ that we started with.
\end{proof}

\begin{prop}\label{prop:grpdweq}If $f$ is a weak equivalence, then so is $f^{h\Gamma}$.
\end{prop}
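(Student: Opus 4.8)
The plan is to use the standard characterization: a map of groupoids is a weak equivalence precisely when it is full, faithful, and essentially surjective. So it suffices to verify these three properties for $f^{h\Gamma}$, given them for $f$. Throughout I will lean on the two identities packaged into the hypothesis that $f$ is $\Gamma$-equivariant: $\overline{f(x)} = f(\bar x)$ for $x\in\Ob(X)$, and $\overline{f(\alpha)} = f(\bar\alpha)$ for $\alpha\in\Mor(X)$. (Recall also that the bar operation on each of $X$, $Y$ is a functor squaring to the identity, so it commutes with composition and inversion and $\overline{\bar\alpha} = \alpha$.)

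\textbf{Full and faithful.} Fix $(x,\phi),(x_1,\phi_1)\in X^{h\Gamma}$. By definition the Hom-set $\Hom_{X^{h\Gamma}}((x,\phi),(x_1,\phi_1))$ is the subset of $\Hom_X(x,x_1)$ cut out by the condition $\phi_1\alpha = \bar\alpha\phi$, and likewise $\Hom_{Y^{h\Gamma}}((f(x),f(\phi)),(f(x_1),f(\phi_1)))$ is the subset of $\Hom_Y(f(x),f(x_1))$ cut out by $f(\phi_1)\beta = \bar\beta f(\phi)$. Since $f$ is full and faithful, $\alpha\mapsto f(\alpha)$ is a bijection $\Hom_X(x,x_1)\to\Hom_Y(f(x),f(x_1))$; and, applying $f$ and using the equivariance identities plus injectivity of $f$ on morphisms, $\phi_1\alpha = \bar\alpha\phi$ holds if and only if $f(\phi_1)f(\alpha) = \overline{f(\alpha)}f(\phi)$. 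Hence this bijection restricts to a bijection of the two Hom-sets, which is exactly $f^{h\Gamma}$ on morphisms.

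\textbf{Essentially surjective.} Let $(y,\psi)\in Y^{h\Gamma}$, so $\psi\in\Hom_Y(y,\bar y)$ with $\bar\psi = \psi^{-1}$. Since $f$ is essentially surjective, choose $x\in\Ob(X)$ and an isomorphism $\gamma\colon f(x)\to y$. The composite $\bar\gamma^{-1}\psi\gamma$ is a morphism $f(x)\to f(\bar x)$ (using $\overline{f(x)} = f(\bar x)$), so by fullness and faithfulness there is a unique $\phi\in\Hom_X(x,\bar x)$ with $f(\phi) = \bar\gamma^{-1}\psi\gamma$. Applying $f$, the equivariance identities, and $\bar\psi = \psi^{-1}$, one computes $f(\bar\phi) = \overline{f(\phi)} = \gamma^{-1}\bar\psi\bar\gamma = (\bar\gamma^{-1}\psi\gamma)^{-1} = f(\phi^{-1})$, whence $\bar\phi = \phi^{-1}$ by faithfulness; thus $(x,\phi)\in X^{h\Gamma}$. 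Finally $f(\phi) = \bar\gamma^{-1}\psi\gamma$ rearranges to $\psi\gamma = \bar\gamma f(\phi)$, which says precisely that $\gamma$ is an arrow $(f(x),f(\phi))\to(y,\psi)$ in $Y^{h\Gamma}$; being invertible in $Y$, it is invertible here. So $f^{h\Gamma}$ is essentially surjective, hence a weak equivalence.

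\textbf{Expected difficulty.} There is no genuine obstacle: the whole argument is bookkeeping with the cocycle condition $\bar\phi = \phi^{-1}$ and the equivariance of $f$, and the routine checks are the two displayed computations above (both instances of ``bar reverses a conjugation''). The only point worth flagging is that essential surjectivity of $f^{h\Gamma}$ is obtained from that of $f$ via a choice of $x$ and $\gamma$ — the usual and unavoidable appeal to choice in ``fully faithful $+$ essentially surjective $\Rightarrow$ equivalence'', consistent with the treatment of weak equivalences elsewhere in the note. One could instead transport a (necessarily non-$\Gamma$-equivariant) homotopy inverse of $f$ and check it descends to homotopy fixed points, but that route requires the same computations, so the direct verification seems cleanest.
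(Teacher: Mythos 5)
Your proof is correct and follows essentially the same route as the paper: both verify that $f^{h\Gamma}$ is full, faithful, and essentially surjective, with the same conjugation computations (lifting the cocycle condition via fullness and faithfulness of $f$, and using $\gamma$ itself as the isomorphism $(f(x),f(\phi))\to(y,\psi)$). The only cosmetic difference is that you package full and faithful together as a bijection of Hom-sets, whereas the paper treats them separately.
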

\begin{proof}We argue that the functor $f^{h\Gamma}$ is full, faithful and essentially surjective. Start with faith. It suffices to show that $f^{h\Gamma}$ is injective on automorphism groups. So suppose given an automorphism of $(x,\phi)$ in $X^{h\Gamma}$. Write $\alpha\colon x\to x$ for the corresponding morphism in $X$. If $f^{h\Gamma}$ maps the former to the identity, then by definition $f(\alpha) = \id$. As $f$ is faithful, this yields $\alpha=\id$. Hence, $f^{h\Gamma}$ is faithful.

    Now we show $f^{h\Gamma}$ is full. Let
    $(f(x), f(\phi)) \to (f(x_1), f(\phi_1))$
    be an arrow in $Y^{h\Gamma}$, with $(x, \phi)$ and $(x_1,\phi_1)$ in $X^{h\Gamma}$. Let $\beta\colon f(x) \to f(x_1)$ be the corresponding morphism in $Y$. As $f$ is full, there exists $\alpha\colon x\to x_1$ such that $f(\alpha)=\beta$. Further, 
    \[
    f(\phi_1\alpha) = f(\phi_1)\beta 
                    = \bar\beta f(\phi) 
                = f(\bar\alpha \phi). \]
    The first equality is by definition of $\alpha$, the second because $\beta$ corresponds to a morphism in $Y^{h\Gamma}$, and the third by equivariance of $f$. As $f$ is faithful, this implies
    $\phi_1\alpha = \bar\alpha \phi$. So $\alpha$ yields an arrow $(x,\phi)\to (x_1,\phi_1)$ in $X^{h\Gamma}$ which, by construction, is mapped by $f^{h\Gamma}$ to the arrow $(f(x),f(\phi)) \to (f(x_1), f(\phi_1))$ that we started with.
    Thus, $f^{h\Gamma}$ is full.
    
    To show $f^{h\Gamma}$ is essentially surjective, let $(y,\psi)\in Y^{h\Gamma}$. As $f$ is essentially surjective, there exists an isomorphism $\alpha\colon f(x) \to y$, for some $x\in X$.
    Now
    \[
    \overline{\bar\alpha^{-1}\psi\alpha} = \alpha^{-1}\bar\psi\bar\alpha 
                                         = \alpha^{-1}\psi^{-1}\bar\alpha 
                                         = (\bar\alpha^{-1}\psi\alpha)^{-1}.
                                     \]
    Hence, 
    $(f(x), \bar\alpha^{-1}\psi\alpha)\in Y^{h\Gamma}$.
    As $\bar\alpha^{-1}\psi\alpha$ is a morphism $f(x)\to f(\bar x)$ and $f$ is full, there exists $\phi\colon x\to \bar x$ such that 
    $f(\phi)=\bar\alpha^{-1}\psi\alpha$.
    As $f$ is faithful, $\bar\phi = \phi^{-1}$. In other words, $(x,\phi)\in X^{h\Gamma}$. Now
    $f^{h\Gamma}((x,\phi)) = (f(x), \bar\alpha^{-1}\psi\alpha)$.
    Moreover, $\alpha\colon f(x)\to y$ gives a morphism 
    $(f(x),\bar\alpha^{-1}\psi\alpha) \to (y, \psi)$
    in $Y^{h\Gamma}$.
    Thus, $f$ is essentially surjective.
\end{proof}

\begin{example}\label{eg:BGfix} Let $G$ be a group and suppose $\Gamma=\ZZ/2\ZZ$ acts on $G$. Then $\Gamma$ acts on $\B G$ in the evident way. By definition,
\[ \Ob((\B G)^{h\Gamma}) = \{ \sigma\in G \mid \sigma\bar\sigma =1\},\]
with morphisms in $(\B G)^{h\Gamma}$ given by an arrow $\sigma\to \bar g\sigma g^{-1}$, for each $g\in G$.

Let $H^1(\Gamma; G)$ be the set
\[ Z^1(\Gamma; G) = \{\sigma\in G \mid \sigma\bar\sigma =1\},\]
modulo the relation $\sigma \sim \bar g\sigma g^{-1}$, for all $g\in G$. Then $H^1(\Gamma; G)$ parametrizes isomorphism classes in $(\B G)^{h\Gamma}$. The automorphism group of $\sigma\in (\B G)^{h\Gamma}$ is
\[ K_{\sigma} = \{g\in G \mid \bar g \sigma  = \sigma g\}.\]
Consequently, we have a weak equivalence:
\[ (\B G)^{h\Gamma} \simeq \bigsqcup_{[\sigma]\in H^1(\Gamma; G)} \B K_{\sigma}.\]

Any groupoid is weakly equivalent to one of the form $\bigsqcup_i \B G_i$.\footnote{
There is no preferred choice - it depends on picking an object in each isomorphism class of the groupoid.}
In principle, the formula above gives an explicit presentation (up to weak equivalence) of the homotopy fixed points for any groupoid equipped with a $\Gamma$-action.
\end{example}

\begin{example}\label{eg:parspace}
    Let $G$ be a group, $\theta\colon G\to G$ an involution, and $B\subset G$ a $\theta$-stable subgroup (i.e., $\theta(B) = B$). Let $B\times B$ act on $G$ via $(b_1,b_2)\cdot g = b_1gb_2^{-1}$. Define an action of $\Gamma=\ZZ/2\ZZ$ on $\E_{B\times B} G$ via the following prescription:
    \[ \bar g = \theta(g^{-1}), \quad g\in G; \qquad \overline{(b_1,b_2)} = (\theta(b_2), \theta(b_1)), \quad (b_1,b_2)\in B\times B.\]
    Set
    \[ Z = \{g\in G \mid g\theta(g) = 1\}, \]
    and let $B$ act on $Z$ via twisted conjugation:
    \[ b\cdot g = \theta(b)gb^{-1}. \]
    By definition, $(\E_{B\times B}G)^{h\Gamma}$ is the groupoid $\E_{B\times B}X$, where
    \[ X = \{(b_1,b_2, g) \in B \times B \times G \mid b_1gb_2^{-1} = \theta(g^{-1}) \text{ and } b_1\theta(b_2) = 1\}. \]
    and $B\times B$ acts on $X$ via
    \[ (\beta_1,\beta_2)\cdot (b_1,b_2,g) = (\theta(\beta_2)b_1\beta_1^{-1}, \theta(\beta_1)b_2\beta_2^{-1}, \beta_1 g \beta_2^{-1}). \]
    Let
    \[ Y = \{(b,g)\in B\times G \mid bg\theta(bg)= 1\}, \]
    with $B\times B$-action on $Y$ given by
    \[ (\beta_1, \beta_2)\cdot (b,g) = (\theta(\beta_2)b\beta_1^{-1}, \beta_1g\beta_2^{-1}). \]
    Then the map $X\to Y$ given by
    $(b_1,b_2,g) \mapsto (b_1,g)$
    is an isomorphism of $B\times B$-sets (the inverse is $(b,g)\mapsto (b,\theta(b^{-1}), g)$). This induces a canonical isomorphism of groupoids $\E_{B\times B}X \cong \E_{B\times B}Y$.
Further, the map $Y\to Z$, $(b,g) \mapsto bg$, induces an acyclic fibration $\E_{B\times B}Y \to \E_B Z$, since the action of $1\times B$ on $Y$ is free. Putting everything together, we obtain a canonical acyclic fibration:
\[ (\E_{B\times B}G)^{h\Gamma} \mapright{\sim} \E_B Z. \]
\end{example}
\subsection{Filtered colimits}
All (small) limits and colimits exist in the category of groupoids. Limits are given by limits of sets on objects and morphisms. Colimits can be nebulous to describe explicitly. However, \emph{filtered} colimits present no difficulties. Filtered colimits of groupoids are given by filtered colimits of sets on objects and morphisms.

Let $X$ be a functor from a category $I$ to groupoids with $\Gamma$-action. That is, $X$ is a functor from $I$ to groupoids, such that:
(a) for each $i\in I$, the groupoid $X(i)$ is equipped with a $\Gamma$-action;
(b) for each morphism $i\to j$ in $I$, the morphism $X(i)\to X(j)$ is $\Gamma$-equivariant.
Any object $x_i$ (resp. morphism $\phi_i$) in $X(i)$ defines an object $[x_i]$ (resp. morphism $[\phi_i]$) in $\varinjlim_I X$. If $(x_i,\phi_i) \in X(i)^{h\Gamma}$, then $([x_i],[\phi_i])\in (\varinjlim_I X)^{h\Gamma}$. A similar statement applies to arrows in $X(i)$ that correspond to morphisms in $X(i)^{h\Gamma}$. Consequently, we have a canonical map $\varinjlim_I X^{h\Gamma} \to (\varinjlim_I X)^{h\Gamma}$.
\begin{prop}\label{prop:colimitcommute}If $I$ is filtered, then the canonical map
    \[ \varinjlim_I X^{h\Gamma} \to (\varinjlim_I X)^{h\Gamma} \]
    is an isomorphism.
\end{prop}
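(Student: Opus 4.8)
The plan is to recognise the passage from $X$ to $X^{h\Gamma}$ as a construction that, on the underlying object- and morphism-sets, uses only \emph{finite} limits in $\Set$, and then to invoke two facts already in play: filtered colimits of groupoids are computed objectwise on $\Ob$ and $\Mor$, and (the standard fact that) filtered colimits commute with finite limits in $\Set$. Concretely, I would first unwind the definition of $X^{h\Gamma}$ to present it as an equational construction from the datum consisting of the two sets $\Ob(X)$, $\Mor(X)$, the source, target, identity, composition and inversion maps of the groupoid $X$, and the two maps $x\mapsto\bar x$, $\phi\mapsto\bar\phi$ giving the $\Gamma$-action. Writing $s,t$ for source and target: $\Ob(X^{h\Gamma})\subseteq\Ob(X)\times\Mor(X)$ is the common solution set of $s(\phi)=x$, $t(\phi)=\bar x$ and $\bar\phi=\phi^{-1}$, hence a finite limit of copies of $\Ob(X)$ and $\Mor(X)$ along these structure and action maps; and, using the fact that $(x_1,\bar\alpha\phi\alpha^{-1})\in X^{h\Gamma}$ whenever $(x,\phi)\in X^{h\Gamma}$ and $\alpha\colon x\to x_1$ (the computation in the proof of Proposition~\ref{prop:grpdincfib}), $\Mor(X^{h\Gamma})$ is identified with the set of pairs $(\alpha,\phi)\in\Mor(X)\times\Mor(X)$ satisfying $s(\alpha)=s(\phi)$, $t(\phi)=\overline{s(\alpha)}$ and $\bar\phi=\phi^{-1}$, the corresponding arrow being $(s(\alpha),\phi)\to(t(\alpha),\bar\alpha\phi\alpha^{-1})$ --- again a finite limit --- with the source, target, identity and composition of the groupoid $X^{h\Gamma}$ built from those of $X$, the $\Gamma$-action, and pullbacks over $\Ob(X)$. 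All of this is functorial in $X$ for $\Gamma$-equivariant maps, recovering the assignment $f\mapsto f^{h\Gamma}$ defined earlier.

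Next I would apply this with $X$ taken to be the diagram $i\mapsto X(i)$. Since $I$ is filtered, $\Ob(\varinjlim_I X)=\varinjlim_I\Ob(X(i))$ and $\Mor(\varinjlim_I X)=\varinjlim_I\Mor(X(i))$, compatibly with all structure maps and with the $\Gamma$-action (each transition map being equivariant). Because filtered colimits commute with finite limits in $\Set$, forming $X^{h\Gamma}$ commutes with $\varinjlim_I$ at the level of object-sets and of morphism-sets; I would then check directly that the resulting bijections $\varinjlim_I\Ob(X(i)^{h\Gamma})\to\Ob((\varinjlim_I X)^{h\Gamma})$ and $\varinjlim_I\Mor(X(i)^{h\Gamma})\to\Mor((\varinjlim_I X)^{h\Gamma})$ are precisely those induced by the canonical map $[(x_i,\phi_i)]\mapsto([x_i],[\phi_i])$ of the statement, and that they are compatible with source, target, identities and composition. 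Since a map of groupoids that is bijective on objects and on morphisms is an isomorphism, this completes the proof.

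I do not anticipate a genuine obstacle; the only care needed is the bookkeeping identifying the abstract comparison isomorphism with the explicit map of the statement. Spelled out, this is the familiar filtered-colimit device of lifting finitely much data to a common later index: for instance, given $(x,\phi)\in(\varinjlim_I X)^{h\Gamma}$, choose a representative $x_i$ of $x$ and a representative $\phi_j$ of $\phi$, pass to some $k$ above $i$ and $j$ at which $\phi_k$ has source $x_k$ and target $\bar x_k$, then to some $l\ge k$ at which $\bar\phi_l=\phi_l^{-1}$ holds in $X(l)$, obtaining $(x_l,\phi_l)\in X(l)^{h\Gamma}$ mapping to $(x,\phi)$; injectivity on objects, and the two assertions about arrows, follow by the same device, each step being legitimate precisely because $I$ is filtered. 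The conceptual content is simply that $X^{h\Gamma}$ is a finite-limit construction in $X$, hence insensitive to filtered colimits.
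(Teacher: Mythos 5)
Your proposal is correct, but it is packaged differently from the paper's proof. The paper argues directly: it constructs the inverse of the canonical map by lifting an object $(x,\phi)$ of $(\varinjlim_I X)^{h\Gamma}$ to representatives $x_i$, $\phi_j$ at finite stages, using filteredness to pass to a common index where the conditions $\phi_i\in\Hom(x_i,\bar x_i)$ and $\bar\phi_i=\phi_i^{-1}$ hold and to check independence of choices, and similarly for arrows --- exactly the ``lifting device'' you relegate to your final paragraph. Your main route instead presents $X^{h\Gamma}$ as a finite-limit construction in $\Set$ on $\Ob(X)$ and $\Mor(X)$ (equalizer-type conditions $s(\phi)=x$, $t(\phi)=\bar x$, $\bar\phi=\phi^{-1}$, and the identification of $\Mor(X^{h\Gamma})$ with pairs $(\alpha,\phi)$, the target twist $\bar\alpha\phi\alpha^{-1}$ being forced as in Proposition \ref{prop:grpdincfib}), and then invokes commutation of filtered colimits with finite limits --- the very fact the paper itself cites from [KS, Theorem 3.1.6] one level up, when explaining that stalks are filtered colimits and that filtered colimits of groupoids are computed on objects and morphisms. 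What your packaging buys is a conceptual explanation (homotopy fixed points are a finite-limit construction, hence insensitive to filtered colimits) that generalizes painlessly to other finite-limit constructions; what the paper's direct argument buys is brevity and no need to verify that the abstract comparison isomorphism agrees with the explicit map $[(x_i,\phi_i)]\mapsto([x_i],[\phi_i])$, a bookkeeping step your route does require and which you correctly flag. Both arguments are complete and rest on the same two inputs, so either stands.
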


\begin{proof}
    The inverse is given as follows. Let $(x,\phi)\in (\varinjlim_I X)^{h\Gamma}$. Then there exist $x_i\in \Ob(X(i))$ and $\phi_j\in \Mor(X(j))$, for some $i,j\in I$, such that $x$ is the image of $x_i$ and $\phi$ is the image of $\phi_j$.
    As $I$ is filtered we may assume $i=j$, and that $\phi_i$ is a morphism $x_i\to \bar x_i$ such that $\bar\phi_i = \phi_i^{-1}$. 
    In other words, $(x_i,\phi_i)$ defines an object of $X(i)^{h\Gamma}$, and hence an object of $\varinjlim_I X^{h\Gamma}$. This latter object does not depend on any of the choices made, again because $I$ is filtered.
    Similarly, any morphism in $(\varinjlim_I X)^{h\Gamma}$ comes from an arrow in some $X(i)$. As $I$ is filtered, we may assume that this arrow corresponds to a morphism in $X(i)^{h\Gamma}$. Using that $I$ is filtered once again, we see that this defines a morphism in $\varinjlim_I X^{h\Gamma}$ that is independent of the choices made.
\end{proof}

\begin{example}
    Our construction/model of homotopy fixed points of a groupoid is a special case of a more general version for simplicial sets. One goes back and forth between simplicial sets and groupoids using the nerve and fundamental groupoid constructions. However, Proposition \ref{prop:stalkscommute} does \emph{not} hold for simplicial sets in general. Here is a standard counterexample.
    
    It will be convenient to conflate simplicial sets with their topological realization. For topological spaces $X,Y$, write $\Map(X,Y)$ for the set of continuous maps $X\to Y$. Topologize $\Map(X,Y)$ using the compact open topology. Then the set of path components $\pi_0\Map(X,Y)$ consists of homotopy classes of maps. 
    
    Let $\RR\PP^n$ denote real projective space of dimension $n$. If $\Gamma$ acts trivially on $Y$, then $Y^{h\Gamma} = \Map(\RR\PP^{\infty}, Y)$ (this can be taken as a definition).
    Let $\Gamma$ act on each $\RR\PP^n$, $n\in\ZZ_{\geq 0}$, trivially. Then
    \[ \varinjlim_n (\RR\PP^n)^{h\Gamma} = \varinjlim_n \Map(\RR\PP^{\infty}, \RR\PP^n) \qquad
    \text{and}\qquad (\varinjlim_n \RR\PP^n)^{h\Gamma} = \Map(\RR\PP^{\infty}, \RR\PP^{\infty}).\]
    However,
    \[ \pi_0\varinjlim_n \Map(\RR\PP^{\infty}, \RR\PP^n) \neq \pi_0\Map(\RR\PP^{\infty}, \RR\PP^{\infty}). \]
    Indeed, as each $\RR\PP^n$ is Hausdorff, so is $\Map(\RR\PP^{\infty}, \RR\PP^n)$. Consequently,
    \[ \pi_0\varinjlim_n \Map(\RR\PP^{\infty}, \RR\PP^n) = \varinjlim_n\pi_0\Map(\RR\PP^{\infty}, \RR\PP^n).\]
    Now the identity $\RR\PP^{\infty} \to\RR\PP^{\infty}$ is not homotopic to any map that factors through $\RR\PP^n$, $n<\infty$, since $\RR\PP^{\infty}$ has non-vanishing cohomology in arbitrarily high degree. 
    Thus, $\varinjlim_n (\RR\PP^n)^{h\Gamma}$ is not even weakly equivalent to $(\varinjlim_n \RR\PP^n)^{h\Gamma}$.
\end{example}

\section{(Pre)sheaves of groupoids}
Let $\cC$ be a Grothendieck site. Throughout, we assume $\cC$ has enough points (see \cite[Tag 00YJ]{Stacks}). By definition of a point, the stalk of a presheaf of sets at a point is a left exact functor to the category of sets. On the other hand, stalks are defined as a certain colimit of sets (see \cite[Tag 00Y3]{Stacks}). Now a (small) category is filtered if and only if colimits into the category of sets, indexed by said category, commute with finite limits \cite[Theorem 3.1.6]{KS}. It follows that stalks are given by filtered colimits.
Set\footnote{`presheaf' = `strict contravariant functor' (as opposed to a lax functor).}
\[ \PreGrpd(\cC) = \text{category of presheaves of groupoids on $\cC$}.\] 
For a point $t$ of $\cC$, and $X\in \PreGrpd(\cC)$, write $X_t$ for the stalk at $t$ (defined using the same formula as for presheaves of sets). Each stalk $X_t$ is a groupoid.

    \begin{example}
        Let $S$ be a Hausdorff (more generally, a sober) topological space. The site $\Op|_S$ is the poset of open subsets $U\subset S$. A covering family for an open subset $U$ is an open cover $U_i \subset U$. Each point $t\in S$, in the usual sense, gives a point of the site $\Op|_S$. Every point of $\Op|_S$ is obtained this way, and $\Op|_S$ has enough points. For a presheaf $X$ on $\Op|_S$, the stalk at $t$ is defined in the usual way by
$F_t = \varinjlim_{t\in U} X(U)$,
This colimit is clearly filtered.
\end{example}

\begin{example}Let $S$ be a scheme. The \'etale site $\et|_S$ has as objects all \'etale maps $\phi\colon U\to S$. Morphisms are given by the evident commutative triangles.
    The coverings are surjective families of \'etale morphisms over $S$. If $t\colon \Spec(k) \to S$ is a morphism from an algebraically closed field $k$, then $t$ defines a point of $\et|_S$. Every point of $\et|_S$ is obtained this way and $\et|_S$ has enough points. For a presheaf $X$ on $\et|_S$, the stalk at $t$ is $X_t = \varinjlim_{(U,t)} X(U)$,
                            where the colimit is indexed by diagrams
                            $\Spec(k) \to U \to S$,
                            with $U\to S$ \'etale, and such that the composite map $\Spec(k) \to S$ is $t$.
                            These diagrams are the \'etale neighborhoods of $t$. As $\et|_S$ has all finite products (fibre products over $S$), the category of \'etale neighborhoods of $t$ is cofiltered.
                        \end{example}

                        \begin{example}Let $S$ be a scheme. The big \'etale site on $S$ has as underlying category the category of all schemes over $S$. The coverings are surjective families of \'etale morphisms over $S$. This site has enough points. Points are given by points $t$ of $\et|_T$ for every object $T$ of the big site. The stalk at $t$ of a presheaf $X$ is defined by restricting to $\et|_T$. I.e., $X_t = (X|_{\et|_T})_t$.
                            See \cite[Tag 03PN]{Stacks} for more details.
                        \end{example}

A morphism $X\to Y$ in $\Pre\Grpd(\cC)$ is called a \emph{sectionwise weak equivalence} (resp. \emph{sectionwise fibration}) if it induces a weak equivalence (resp. fibration) of groupoids $X(U) \to Y(U)$, for all $U\in \cC$. It is a \emph{local weak equivalence} (resp. \emph{local fibration}) if it is a weak equivalence (resp. fibration) of groupoids on all stalks.
Fibrations that are also weak equivalences are called \emph{acyclic}.
Sectionwise weak equivalences (resp. sectionwise fibrations) are local weak equivalences (resp. local fibrations).

\begin{example}A morphism between sheaves of sets (viewed as groupoids) is a local weak equivalence if and only if it is an isomorphism.
\end{example}

    Extending the usual notion of sheafification to presheaves of groupoids poses no real difficulties. The canonical map from a presheaf of groupoids to its sheafification is a local weak equivalence.
A stack is a sheaf of groupoids for which descent data relative to any covering is effective.
 A local weak equivalence between stacks is always a sectionwise weak equivalence (effective descent). In other words, a local weak equivalence between stacks coincides with the traditional notion of `an equivalence of stacks'.
    Similar to sheafification, the forgetful functor from stacks to presheaves of groupoids admits a left adjoint called stack completion (or stackification). For $X\in\PreGrpd(\cC)$, set
    \[ [X]= \text{stack completion of $X$}. \]
    An explicit model for $[X]$ may be constructed using a process similar to sheafification. The unit of adjunction yields a canonical map $X\to [X]$ which is always a local weak equivalence. A convenient reference is \cite[Ch. 9]{J} (or see \cite[Tag 02ZO]{Stacks}).

    \begin{example}Let $G$ be a presheaf of groups on $\cC$. Let $X$ be a presheaf of sets on $\cC$ equipped with a $G$-action. Define $E_GX\in \PreGrpd(\cC)$ by:
    \[ (E_GX)(U) = \E_{G(U)}X(U).\]
    Set
    \[ EG = E_G G \qquad \mbox{and}\qquad BG = E_G(\ast).\]
    Define $[X/G]$ by:
    \begin{align*}
        [X/G](U) = \{(P, \varphi)\mid &\, P\text{ is a $G$-torsor over $U$}, \varphi\colon P\to X|_U \\
                                      &\, \text{is a $G|_U$-equivariant morphism}\}
        \end{align*}
        Finally, set
        \[ [BG] = [\ast/G]. \]
        Then the canonical map $EG \to \ast$ is a sectionwise weak equivalence. The evident map $BG\to [BG]$ (map the single object in $BG(U)$ to the trivial torsor on $U$) is a local weak equivalence. It is generally not a sectionwise weak equivalence. The presheaf of groupoids $E_GX$ is usually not a stack. In particular, $BG$ is generally not a stack. However, $[X/G]$ and in particular $[BG]$ is a stack. In fact, $[X/G]$ is the stack completion of $E_GX$.
    \end{example}

    \subsection{Homotopy fixed points}
    Suppose $\Gamma=\ZZ/2\ZZ$ acts on $X\in\PreGrpd(\cC)$. That is, $\Gamma$ acts on sections $X(U)$, $U\in \cC$, and these actions are compatible with restriction maps. Define $X^{h\Gamma}$ by:
\[ X^{h\Gamma}(U) = X(U)^{h\Gamma}.\]
The map of groupoids on sections:
\[ X^{h\Gamma}(U) \to X(U), \qquad (x,\phi)\mapsto x, \]
$U\in \cC$, yields a morphism $\iota\colon X^{h\Gamma} \to X$.
\begin{prop}The morphism $\iota\colon X^{h\Gamma} \to X$ is a sectionwise fibration.
\end{prop}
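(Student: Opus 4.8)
The statement is essentially a sectionwise repackaging of Proposition \ref{prop:grpdincfib}, so the plan is simply to unwind the definition of ``sectionwise fibration'' and quote that result. First I would recall that a morphism in $\PreGrpd(\cC)$ is a sectionwise fibration precisely when, for every $U\in\cC$, the induced map of groupoids on sections is a fibration in the sense of Section 2. So fix $U\in\cC$.

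Next I would observe that the hypothesis ``$\Gamma$ acts on $X\in\PreGrpd(\cC)$'' gives, for this fixed $U$, a $\Gamma$-action on the groupoid $X(U)$ in the sense used earlier (the action on objects and morphisms of $X(U)$ is by definition compatible with the groupoid structure maps, since the sectionwise action is). By the definitions $X^{h\Gamma}(U) = X(U)^{h\Gamma}$, and the map $X^{h\Gamma}(U)\to X(U)$ induced by $\iota$ on sections is, again by definition, exactly the map $\i\colon X(U)^{h\Gamma}\to X(U)$, $(x,\phi)\mapsto x$, of Section 2 applied to the groupoid $X(U)$.

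Then I would invoke Proposition \ref{prop:grpdincfib} directly: it asserts that $\i\colon X(U)^{h\Gamma}\to X(U)$ is a fibration of groupoids. Since $U\in\cC$ was arbitrary, every section map induced by $\iota$ is a fibration, which is precisely the assertion that $\iota\colon X^{h\Gamma}\to X$ is a sectionwise fibration. There is no real obstacle here; the only point requiring a moment's care is the bookkeeping identification of the section map of $\iota$ with the groupoid-level map $\i$, and the verification that the sectionwise $\Gamma$-action restricts to a bona fide $\Gamma$-action on each groupoid $X(U)$ — both are immediate from the definitions.
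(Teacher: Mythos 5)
Your proposal is correct and is exactly the paper's argument: the paper's proof is the one-line ``Apply Proposition \ref{prop:grpdincfib}'', and your unwinding of the definitions of $X^{h\Gamma}(U)$ and of sectionwise fibration is just the bookkeeping that makes that application explicit.
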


\begin{proof}Apply Proposition \ref{prop:grpdincfib}.
\end{proof}

\begin{prop}\label{prop:stalkscommute}Let $t$ be a point of $\cC$.
    Then there is a canonical isomorphism of groupoids $(X^{h\Gamma})_t \cong (X_t)^{h\Gamma}$.
\end{prop}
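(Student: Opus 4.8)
The plan is to deduce this from Proposition \ref{prop:colimitcommute} together with the fact, recalled above, that stalks are computed by filtered colimits. Fix a point $t$ of $\cC$. By the discussion preceding the statement, there is a small \emph{filtered} category $I$ and a diagram $i \mapsto U_i$ in $\cC$ (the neighborhoods of $t$, suitably ordered) such that for every presheaf $F$ on $\cC$ one has $F_t = \varinjlim_{i\in I} F(U_i)$, naturally in $F$ and compatibly with the colimit structure maps. Since the $\Gamma$-action on $X$ is compatible with restriction maps, the diagram $i \mapsto X(U_i)$ is a functor from $I$ to groupoids with $\Gamma$-action in the sense of the paragraph preceding Proposition \ref{prop:colimitcommute}: each $X(U_i)$ carries a $\Gamma$-action, and each transition morphism $X(U_i)\to X(U_j)$ is $\Gamma$-equivariant.

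Next I would unwind the two sides. By the sectionwise definition $X^{h\Gamma}(U) = X(U)^{h\Gamma}$ and the naturality of stalks,
\[ (X^{h\Gamma})_t = \varinjlim_{i\in I} X^{h\Gamma}(U_i) = \varinjlim_{i\in I} X(U_i)^{h\Gamma}, \]
while directly from the definition of the stalk,
\[ (X_t)^{h\Gamma} = \Bigl(\varinjlim_{i\in I} X(U_i)\Bigr)^{h\Gamma}. \]
Applying Proposition \ref{prop:colimitcommute} to the $I$-diagram $i\mapsto X(U_i)$ of groupoids with $\Gamma$-action (valid since $I$ is filtered) yields a canonical isomorphism between these two groupoids, which is precisely the asserted isomorphism $(X^{h\Gamma})_t \cong (X_t)^{h\Gamma}$. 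One checks that under the above identifications it is induced on stalks by the formula $(x,\phi)\mapsto (x,\phi)$, i.e.\ it is the evident comparison map, so calling it canonical is justified.

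I do not expect a genuine obstacle: the mathematical content is entirely contained in Proposition \ref{prop:colimitcommute}. The only two points that require (minor) care are that the indexing category $I$ computing the stalk at $t$ really is filtered --- which is exactly why the site was assumed to have enough points, as explained before the statement --- and that forming $X^{h\Gamma}$ commutes strictly with restriction maps, which is immediate from the sectionwise definition. Granting these, the proposition follows formally.
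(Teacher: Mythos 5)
Your argument is correct and is essentially the paper's own proof: the paper simply notes that stalks are computed by filtered colimits and applies Proposition \ref{prop:colimitcommute}, which is exactly your route, only written out in more detail. (One small quibble: the filteredness of the stalk diagram comes from a point being a left exact functor computed as a colimit, as recalled at the start of Section 3, not from the ``enough points'' hypothesis itself, which only guarantees that such points suffice to detect local properties.)
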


\begin{proof}Stalks are given by filtered colimits. Apply Proposition \ref{prop:colimitcommute}.
%
\end{proof}

Let $Y$ be another presheaf of groupoids equipped with a $\Gamma$-action. A $\Gamma$-equivariant morphism $f\colon X\to Y$ is a morphism in $\PreGrpd(\cC)$ that commutes with the $\Gamma$-actions on sections. Such an $f$ induces a morphism $f^{h\Gamma}\colon X^{h\Gamma} \to Y^{h\Gamma}$.

\begin{cor}
    If $f$ is a local fibration, then so is $f^{h\Gamma}$.
\end{cor}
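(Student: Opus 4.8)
The plan is to reduce everything to the groupoid-level statement Proposition \ref{prop:grpdfib}, exploiting that local fibrations are by definition detected on stalks and that stalks commute with the homotopy fixed point construction.

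Fix a point $t$ of $\cC$. Since $f$ is a local fibration, the stalk $f_t\colon X_t\to Y_t$ is a fibration of groupoids, so Proposition \ref{prop:grpdfib} gives that $(f_t)^{h\Gamma}\colon (X_t)^{h\Gamma}\to (Y_t)^{h\Gamma}$ is again a fibration of groupoids. It then remains to identify $(f_t)^{h\Gamma}$ with the stalk $(f^{h\Gamma})_t\colon (X^{h\Gamma})_t\to (Y^{h\Gamma})_t$ under the canonical isomorphisms $(X^{h\Gamma})_t\cong (X_t)^{h\Gamma}$ and $(Y^{h\Gamma})_t\cong (Y_t)^{h\Gamma}$ of Proposition \ref{prop:stalkscommute}. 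Unwinding the proof of Proposition \ref{prop:colimitcommute}, an object of either $(X^{h\Gamma})_t$ or $(X_t)^{h\Gamma}$ is represented by a pair $(x_i,\phi_i)\in X(U_i)^{h\Gamma}$ coming from some neighborhood $U_i$ of $t$, and the isomorphism between them is the identity on such representatives; the morphism $f^{h\Gamma}$ carries this representative to $(f(x_i),f(\phi_i))$, which is exactly the effect of $(f_t)^{h\Gamma}$. Carrying out the same bookkeeping on arrows shows that the square
\[
\begin{array}{ccc}
(X^{h\Gamma})_t & \longrightarrow & (X_t)^{h\Gamma}\\
\downarrow & & \downarrow\\
(Y^{h\Gamma})_t & \longrightarrow & (Y_t)^{h\Gamma}
\end{array}
\]
(horizontal arrows the isomorphisms of Proposition \ref{prop:stalkscommute}, left vertical $(f^{h\Gamma})_t$, right vertical $(f_t)^{h\Gamma}$) commutes.

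Given the commuting square, $(f^{h\Gamma})_t$ is a fibration of groupoids for every point $t$, which is precisely the statement that $f^{h\Gamma}$ is a local fibration. I expect the only genuine point to verify is that square: it is formal, but one must track the filtered-colimit representatives through both constructions and confirm that the canonical maps of Propositions \ref{prop:colimitcommute} and \ref{prop:stalkscommute} are natural in the $\Gamma$-equivariant morphism $f$. Everything else is a direct citation of results already established.
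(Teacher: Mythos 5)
Your argument is correct and is essentially the paper's own proof: the paper simply cites Proposition \ref{prop:grpdfib}, implicitly using Proposition \ref{prop:stalkscommute} to identify stalks of the fixed-point presheaf, exactly as you do. The only difference is that you make explicit the naturality of the stalk/fixed-point comparison in $f$, which the paper leaves tacit; that check is correct and harmless.
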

\begin{proof}
    Apply Proposition \ref{prop:grpdfib}.
\end{proof}

    \begin{cor}
    If $f$ is a local weak equivalence, then so is $f^{h\Gamma}$.
\end{cor}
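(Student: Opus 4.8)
The plan is to reduce to the groupoid-level statement already proved in Proposition \ref{prop:grpdweq} by passing to stalks, using that the homotopy fixed point construction commutes with stalks (Proposition \ref{prop:stalkscommute}).

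First I would unwind the definition: saying $f$ is a local weak equivalence means precisely that for every point $t$ of $\cC$ the induced map of stalk groupoids $f_t\colon X_t\to Y_t$ is a weak equivalence. Since $f$ is $\Gamma$-equivariant, each $f_t$ is a $\Gamma$-equivariant map of groupoids, the $\Gamma$-action on a stalk being the one induced on the filtered colimit as in the discussion preceding Proposition \ref{prop:colimitcommute}. By Proposition \ref{prop:grpdweq}, the map $(f_t)^{h\Gamma}\colon (X_t)^{h\Gamma}\to (Y_t)^{h\Gamma}$ is then a weak equivalence of groupoids.

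Next I would identify $(f^{h\Gamma})_t$ with $(f_t)^{h\Gamma}$ under the canonical isomorphisms $(X^{h\Gamma})_t\cong (X_t)^{h\Gamma}$ and $(Y^{h\Gamma})_t\cong (Y_t)^{h\Gamma}$ of Proposition \ref{prop:stalkscommute}. This requires checking that the square formed by these two isomorphisms (as vertical maps) together with $(f^{h\Gamma})_t$ and $(f_t)^{h\Gamma}$ (as horizontal maps) commutes. This is the only point requiring a little care, and it is routine: the isomorphism in Proposition \ref{prop:stalkscommute} is the stalk of the natural comparison map built in Proposition \ref{prop:colimitcommute}, whose inverse sends a stalk pair $([x_i],[\phi_i])$ to the class of $(x_i,\phi_i)$; since $f^{h\Gamma}$ is given sectionwise by $(x,\phi)\mapsto (f(x),f(\phi))$ and $f_t$ is the filtered colimit of the sectionwise maps $f$, chasing $([x_i],[\phi_i])$ around either side of the square produces the class of $(f(x_i),f(\phi_i))$. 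The naturality statement for arrows is verified identically.

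Finally, combining the two steps: for every point $t$ the map $(f^{h\Gamma})_t$ is, up to the canonical isomorphisms above, equal to $(f_t)^{h\Gamma}$, which is a weak equivalence of groupoids; hence $(f^{h\Gamma})_t$ is a weak equivalence for all $t$, i.e. $f^{h\Gamma}$ is a local weak equivalence. The main (and essentially only) obstacle is the naturality bookkeeping implicit in Proposition \ref{prop:stalkscommute}; there is no substantive difficulty, since everything in sight is a filtered colimit of the corresponding groupoid-level constructions.
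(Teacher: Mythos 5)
Your proposal is correct and follows exactly the route the paper intends: pass to stalks, identify $(f^{h\Gamma})_t$ with $(f_t)^{h\Gamma}$ via Proposition \ref{prop:stalkscommute}, and apply the groupoid-level Proposition \ref{prop:grpdweq}. The paper's proof is simply the one-line ``Apply Proposition \ref{prop:grpdweq}'', leaving the stalk identification and its naturality implicit, which you have merely spelled out.
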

\begin{proof}Apply Proposition \ref{prop:grpdweq}.\end{proof}

A $\Gamma$-action on $X\in\PreGrpd(\cC)$ induces a $\Gamma$-action on the stack completion $[X]$ such that the canonical morphism $X\to [X]$ is equivariant.  
\begin{cor}\label{cor:stackequiv}There is a canonical local weak equivalence $X^{h\Gamma}\mapright{\sim} [X]^{h\Gamma}$.
\end{cor}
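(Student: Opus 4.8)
The plan is to obtain the statement formally from two facts already in hand: the functoriality of stack completion, and the corollary (resting on Proposition \ref{prop:grpdweq}) asserting that $f^{h\Gamma}$ is a local weak equivalence whenever $f$ is. First I would recall that $[-]\colon\PreGrpd(\cC)\to\PreGrpd(\cC)$ is a functor and that the unit $X\to[X]$ is natural in $X$. A $\Gamma$-action on $X$ is a homomorphism from $\Gamma$ to the automorphism group of $X$ in $\PreGrpd(\cC)$; applying the functor $[-]$ transports it to a $\Gamma$-action on $[X]$, and naturality of the unit makes the canonical morphism $X\to[X]$ into a $\Gamma$-equivariant morphism. This is exactly the assertion recorded immediately before the statement, so for the explicit (sheafification-style) model of $[X]$ used in the text there is nothing further to check: both the action on $[X]$ and the equivariance of the unit are produced on the nose by a strict functor and a strict natural transformation.

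Second, I would invoke the fact, also recorded earlier, that the unit $X\to[X]$ is a local weak equivalence. Being moreover $\Gamma$-equivariant, it induces a morphism $f^{h\Gamma}\colon X^{h\Gamma}\to[X]^{h\Gamma}$, and by the corollary preceding the present one, $f^{h\Gamma}$ is again a local weak equivalence. That is the desired canonical local weak equivalence. If one prefers a hands-on argument, Proposition \ref{prop:stalkscommute} supplies canonical isomorphisms $(X^{h\Gamma})_t\cong(X_t)^{h\Gamma}$ and $([X]^{h\Gamma})_t\cong([X]_t)^{h\Gamma}$ at every point $t$ of $\cC$, while $X_t\to[X]_t$ is a weak equivalence of groupoids precisely because $X\to[X]$ is a local weak equivalence; applying Proposition \ref{prop:grpdweq} stalkwise then yields the claim.

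The only genuinely delicate point — the one I would flag as the main obstacle — is the word \emph{canonical}. Stack completion is pinned down only up to equivalence by its universal property, so one must either commit to the explicit model (in which case everything above is literal) or carry the argument in a $2$-categorical register, where ``canonical'' is read as ``well-defined up to a contractible space of choices.'' Either way the difficulty is organizational rather than mathematical: no new coherence data need be manufactured, since the $\Gamma$-action on $[X]$ is by construction the image under $[-]$ of the $\Gamma$-action on $X$, and the comparison morphism is nothing but $f^{h\Gamma}$ for $f$ the unit.
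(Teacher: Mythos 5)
Your argument is exactly the paper's: the $\Gamma$-equivariance of the unit $X\to[X]$ is the assertion recorded just before the statement, the unit is a local weak equivalence, and the preceding corollary (via Propositions \ref{prop:stalkscommute} and \ref{prop:grpdweq}) then gives that the induced map $X^{h\Gamma}\to[X]^{h\Gamma}$ is a local weak equivalence. This is correct and matches the paper's one-line proof, with your stalkwise variant simply unwinding the same ingredients.
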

\begin{proof}The canonical map $X \to [X]$ is a local weak equivalence.\end{proof}


\section{Langlands parameter spaces}
Let $G$ be a presheaf of groups on $\cC$, equipped with an involution $\theta\colon G\to G$. Let $B\subset G$ be a subgroup presheaf that is stable under $\theta$. Consider the $B\times B$-action on $G$ given by:
\[ (b_1,b_2)\cdot g = b_1gb_2^{-1}.\]
Then $\Gamma=\ZZ/2\ZZ$ acts on the presheaf of groupoids $E_{B\times B}G$ via 
\[\bar g = \theta(g^{-1}),\]
for $g\in \Ob(E_{B\times B}G(U))$, and
\[\overline{(b_1,b_2)} = (\theta(b_2), \theta(b_1)),\]
for $(b_1,b_2)\in \Mor(E_{B\times B}G(U))$, $U\in \cC$. Consider the presheaf of sets $Z^1(\theta; G)$ given by
\[ Z^1(\theta; G)(U) = \{ g\in G(U) \mid g\theta(g) = 1\}.\]
Then $B$ acts on $Z^1(\theta; G)$ via twisted conjugation:
\[ b\cdot g = \theta(b)gb^{-1}.\]
\begin{prop}
    There is a canonical acyclic sectionwise fibration:
    \[ (E_{B\times B}G)^{h\Gamma} \mapright{\sim} E_B Z^1(\theta; G).\]
\end{prop}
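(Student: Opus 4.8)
\emph{Plan.} The strategy is to run the computation of Example~\ref{eg:parspace} section by section and then check that it is natural in $U$. Fix $U\in\cC$. Unwinding the definitions of $E_{B\times B}G$ and of the $\Gamma$-action on it, the groupoid of sections $(E_{B\times B}G)^{h\Gamma}(U)$ is exactly the groupoid $(\E_{B(U)\times B(U)}G(U))^{h\Gamma}$ attached, as in Example~\ref{eg:parspace}, to the group $G(U)$, the involution $\theta$, and the $\theta$-stable subgroup $B(U)$. Example~\ref{eg:parspace} then produces a canonical acyclic fibration of groupoids
\[ (E_{B\times B}G)^{h\Gamma}(U) \mapright{\sim} \E_{B(U)}Z^1(\theta;G)(U) = (E_BZ^1(\theta;G))(U), \]
obtained by composing: the identification of $(\E_{B(U)\times B(U)}G(U))^{h\Gamma}$ with $\E_{B(U)\times B(U)}X_U$, where $X_U=\{(b_1,b_2,g)\mid b_1gb_2^{-1}=\theta(g^{-1}),\ b_1\theta(b_2)=1\}$; the isomorphism of $B(U)\times B(U)$-sets $X_U\to Y_U:=\{(b,g)\mid bg\theta(bg)=1\}$, $(b_1,b_2,g)\mapsto(b_1,g)$; and the map $\E_{B(U)\times B(U)}Y_U\to\E_{B(U)}Z^1(\theta;G)(U)$ induced by $(b,g)\mapsto bg$, which is an acyclic fibration because $1\times B(U)$ acts freely on $Y_U$.

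It then remains to see that these section-wise maps assemble into a morphism of presheaves of groupoids. The restriction maps of $G$ commute with $\theta$ and carry $B$ into $B$, so the sets $X_U$ (resp. $Y_U$) are the sections of a subpresheaf of $B\times B\times G$ (resp. of $B\times G$), the section-wise isomorphisms $X_U\cong Y_U$ and projections $Y_U\to Z^1(\theta;G)(U)$ are morphisms of presheaves of sets, and the relevant equivariances hold section-wise. Consequently the displayed maps glue to a single morphism $(E_{B\times B}G)^{h\Gamma}\to E_BZ^1(\theta;G)$ in $\PreGrpd(\cC)$, and since it is an acyclic fibration of groupoids on every section, it is an acyclic sectionwise fibration, as required.

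The argument is almost entirely bookkeeping; the one point to keep an eye on — and the thing I would call the main obstacle — is that every identification invoked from Example~\ref{eg:parspace} must be given by a universal algebraic formula involving no choices, since that is precisely what makes it compatible with restriction and hence able to glue. (Were some step to require choosing representatives, sectionwise naturality could break, and one would instead have to verify the statement on stalks via Proposition~\ref{prop:stalkscommute}, using that the stalks of the presheaves $X$, $Y$ and $Z^1(\theta;G)$ are computed by the same finitary conditions because stalks are filtered colimits.) As the identifications in Example~\ref{eg:parspace} are manifestly of this universal kind, no such detour is needed.
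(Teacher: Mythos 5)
Your proposal is correct and follows the paper's own route: the paper's proof is simply ``Immediate from Example~\ref{eg:parspace},'' i.e.\ apply that example sectionwise, and your write-up just makes explicit the (choice-free, hence restriction-compatible) nature of the identifications that makes this work. No gaps; you have merely expanded the bookkeeping the paper leaves implicit.
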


\begin{proof}Immediate from Example \ref{eg:parspace}.
\end{proof}

As a special case, let $\cC$ be the big \'etale site of complex varieties\footnote{`complex variety' = `separated scheme of finite type over $\Spec(\CC)$'}. Let $G$ be a complex reductive group, $\theta$ an involution of $G$ that preserves some Borel subgroup $B\subset G$. Write $B\backslash G$ for the corresponding flag variety.
\begin{cor}
    There is a canonical equivalence of stacks:
    \[ [B\backslash G/B]^{h\Gamma} \mapright{\sim} [Z^1(\theta; G)/B]. \]
    \end{cor}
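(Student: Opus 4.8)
The plan is to obtain the Corollary by stack-completing both sides of the Proposition that precedes it, using Corollary \ref{cor:stackequiv} to identify the stack completion of $(E_{B\times B}G)^{h\Gamma}$ with $[B\backslash G/B]^{h\Gamma}$. Recall that $[B\backslash G/B]$ denotes $[G/(B\times B)]$ for the $B\times B$-action $(b_1,b_2)\cdot g = b_1gb_2^{-1}$, and that, as recorded above, $[X/H]$ is the stack completion of $E_HX$; thus $[B\backslash G/B]$ is the stack completion of $E_{B\times B}G$ and $[Z^1(\theta;G)/B]$ is the stack completion of $E_B Z^1(\theta;G)$. The $\Gamma$-action on $E_{B\times B}G$ fixed in Section 4 induces a $\Gamma$-action on $[B\backslash G/B]$ for which the canonical morphism $E_{B\times B}G\to [B\backslash G/B]$ is equivariant. (That $G$ is reductive with $\theta$-stable Borel $B$ is used only to guarantee that $G$, $B$, $Z^1(\theta;G)$ and the flag variety $B\backslash G$ are representable on the big \'etale site, so that the constructions of the previous sections apply verbatim.)

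The only new ingredient needed is the following. \textbf{Claim.} If $W$ is a stack equipped with a $\Gamma$-action, then $W^{h\Gamma}$ is again a stack. I would verify this by checking effective descent directly. By construction $W^{h\Gamma}(U)$ is assembled from $W(U)$ by a finite (homotopy) limit recipe: an object is a pair $(x,\phi)$ with $\phi\in\Hom(x,\bar x)$ and $\bar\phi=\phi^{-1}$, and morphisms are the morphisms of $W(U)$ intertwining the chosen $\phi$'s. Given a covering family $\{U_i\to U\}$ and descent data for $W^{h\Gamma}$ relative to it, the underlying descent data for $W$ glue---since $W$ is a stack---to an object $x\in W(U)$; the morphisms $\phi_i$ then glue to a morphism $\phi\colon x\to\bar x$, the gluing and the identity $\bar\phi=\phi^{-1}$ being local conditions, so that $(x,\phi)\in W^{h\Gamma}(U)$. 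Uniqueness up to unique isomorphism is checked the same way on arrows. (Equivalently: $(-)^{h\Gamma}$ is a finite homotopy limit computed sectionwise, and such limits commute with the homotopy limits expressing descent---but the hands-on check is short.)

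Granting the Claim, the argument closes formally. By Corollary \ref{cor:stackequiv} the canonical morphism $(E_{B\times B}G)^{h\Gamma}\to [E_{B\times B}G]^{h\Gamma}=[B\backslash G/B]^{h\Gamma}$ is a local weak equivalence into a stack, so it exhibits $[B\backslash G/B]^{h\Gamma}$ as the stack completion of $(E_{B\times B}G)^{h\Gamma}$. The Proposition provides a canonical acyclic sectionwise fibration $(E_{B\times B}G)^{h\Gamma}\mapright{\sim}E_B Z^1(\theta;G)$, in particular a local weak equivalence; composing it with the local weak equivalence $E_B Z^1(\theta;G)\to [Z^1(\theta;G)/B]$ into a stack and invoking the universal property of stack completion yields a canonical equivalence
\[ [B\backslash G/B]^{h\Gamma}\mapright{\sim}[Z^1(\theta;G)/B]. \]
Since a local weak equivalence between stacks is automatically a sectionwise weak equivalence, this is an equivalence of stacks in the traditional sense, as asserted. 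I expect the Claim to be the only real obstacle; everything else is assembly of results already in hand.
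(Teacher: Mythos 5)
Your proposal is correct and follows essentially the same route as the paper, whose proof is simply to apply Corollary \ref{cor:stackequiv} together with the preceding Proposition and the identification of $[X/H]$ as the stack completion of $E_HX$. The only addition is your explicit verification that $W^{h\Gamma}$ is a stack when $W$ is (and the universal-property factorization producing an honest morphism), points the paper leaves implicit; your descent check for that claim is sound.
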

    \begin{proof}
        Apply Corollary \ref{cor:stackequiv}.
    \end{proof}
    This is the promised identification of Langlands parameter spaces (trivial infinitesimal character). See \cite{S} for a streamlined exposition of the significance of this space to representation theory.

\end{document}